\definecolor{mahogany}{cmyk}{0, 0.77, 0.87, 0}
\definecolor{salmon}{cmyk}{0, 0.53, 0.38, 0}
\definecolor{melon}{cmyk}{0, 0.46, 0.50, 0}
\definecolor{yellowgreen}{cmyk}{0.44, 0, 0.74, 0}
\definecolor{brickred}{cmyk}{0, 0.89, 0.94, 0.28}
\definecolor{OliveGreen}{cmyk}{0.64, 0, 0.95, 0.40}
\definecolor{RawSienna}{cmyk}{0, 0.72, 1.0, 0.45}
\definecolor{ZurichRed}{rgb}{1, 0, 0} 
\begin{document}

\newtheorem{lemma}[thm]{Lemma}
\newtheorem{corr}[thm]{Corollary}
\newtheorem{proposition}{Proposition}
\newtheorem{theorem}{Theorem}[section]
\newtheorem{deff}[thm]{Definition}
\newtheorem{case}{Case}
\newtheorem{prop}[thm]{Proposition}
\newtheorem{example}{Example}

\newtheorem{corollary}{Corollary}

\theoremstyle{definition}
\newtheorem{remark}{Remark}

\numberwithin{equation}{section}
\numberwithin{definition}{section}
\numberwithin{corollary}{section}

\numberwithin{theorem}{section}

\numberwithin{remark}{section}
\numberwithin{example}{section}
\numberwithin{proposition}{section}

\newcommand{\calD}{\bD}

\newcommand{\conjugate}[1]{\overline{#1}}
\newcommand{\abs}[1]{\left| #1 \right|}
\newcommand{\cl}[1]{\overline{#1}}
\newcommand{\expr}[1]{\left( #1 \right)}
\newcommand{\set}[1]{\left\{ #1 \right\}}

\newcommand{\calC}{\mathcal{C}}
\newcommand{\calK}{\mathcal{K}}
\newcommand{\calS}{\mathcal{S}}
\newcommand{\calE}{\mathcal{E}}
\newcommand{\calF}{\mathcal{F}}
\newcommand{\Rd}{\mathbb{R}^d}
\newcommand{\BR}{\bD(\Rd)}
\newcommand{\R}{\mathbb{R}}
\newcommand{\al}{\alpha}
\newcommand{\RR}[1]{\mathbb{#1}}
\newcommand{\bR}{\mathrm{I\! R\!}}
\newcommand{\ga}{\gamma}
\newcommand{\om}{\omega}
\newcommand{\A}{\mathbb{A}}
\newcommand{\bH}{\mathbb{H}}

\newcommand{\bb}[1]{\mathbb{#1}}
\newcommand{\bI}{\bb{I}}
\newcommand{\bN}{\bb{N}}

\newcommand{\uS}{\mathbb{S}}
\newcommand{\M}{{\mathcal{M}}}
\newcommand{\calB}{{\mathcal{B}}}

\newcommand{\W}{{\mathcal{W}}}

\newcommand{\m}{{\mathcal{m}}}

\newcommand {\mac}[1] { \mathbb{#1} }

\newcommand{\bD}{\mathbb{D}}

\newcommand{\bC}{\Bbb C}

\newtheorem{rem}[theorem]{Remark}
\newtheorem{dfn}[theorem]{Definition}
\theoremstyle{definition}
\newtheorem{ex}[theorem]{Example}
\numberwithin{equation}{section}

\newcommand{\Pro}{\mathbb{P}}
\newcommand\F{\mathcal{F}}
\newcommand\E{\mathbb{E}}
\newcommand\e{\varepsilon}
\def\H{\mathcal{H}}
\def\t{\tau}

\newcommand{\blankbox}[2]{%
  \parbox{\columnwidth}{\centering
    \setlength{\fboxsep}{0pt}%
    \fbox{\raisebox{0pt}[#2]{\hspace{#1}}}%
  }%
}

\title[CZ Operators and Martingale Transforms]{On a class of Calder\'on-Zygmund Operators Arising from Projections of Martingale Transforms}

\author{Michael Perlmutter*}\thanks{* Supported in part  by NSF Grant
\# 0603701-DMS under PI Rodrigo Ba\~nuelos}
\address{Department of Mathematics, Purdue University, West Lafayette, IN 47907, USA}
\email{mperlmut@math.purdue.edu}

\begin{abstract}   We prove that a large class of operators, which arise as the projections of martingale transforms of stochastic integrals with respect to Brownian motion, as well as other closely related operators, are in fact Calder\'on--Zygmund operators.  Consequently, such operators are not only bounded on $L^p$, $1<p<\infty$, but also satisfy  weak-type inequalities.  Unlike the boundedness on $L^p,$ which can be obtained directly from the Burkholder martingale transform inequalities, the weak-type estimates do not follow from the corresponding martingale results.   
\end{abstract}

%

\maketitle


\section{Introduction and statement of results}
Martingale inequality methods provide a powerful tool to study the $L^p$ boundedness, $1<p<\infty$,  of the basic Calder\'on-Zygmund singular integral operators on $\R^n$.  An advantage of these techniques is that they give very good information on the size of these $L^p$ bounds and, in particular,  provide constants that are independent of the dimension.  These same arguments can be used to extend results from  $\R^n$ to manifolds and to the Ornstein-Uhleneck case.  For some applications of these methods we refer the reader to \cite{Ban2}, \cite{BanBau}, \cite{BanMen}, \cite{BanWan}, \cite{BanOse}, \cite{BorJanVol}, \cite{GeiMonSmi}, \cite{NazVol}, \cite{Ose},  and the many references provided there.  However, as powerful as these techniques are, weak-type martingale inequalities cannot be directly transferred to singular integral operators.   For example, while Burkholder's celebrated $L^p$ inequalities, $1<p<\infty$,  for martingale transforms \cite{Bur2}, with his famous bound $``(p^*-1)",$ gives the same $L^p$ bound for many singular integral operators,  his weak-type martingale transform bound $``2"$ provides no information for the weak-type inequalities  of  those operators.  This is due to the fact that the probabilistic representation of such operators involves the use of conditional  expectation which does not preserve weak-type inequalities. The purpose of this paper is to show that a very general class of operators, including many of the operators considered in \cite{BanMen} and \cite{BanWan}, which arise as the projections of martingale transforms, are Calder\'on-Zygmund operators. Such operators  do not have to be of convolution type.
Once we know that these are Calder\'on-Zygmund operators, they then satisfy all the properties of such operators, including their weak-type boundedness. This does not, of course, answer  important questions that have been of interest to many people, starting with Stein \cite{Ste1} in the case of the Riesz transforms:  do these operators have weak-type bounds independent of the dimension?  Do weak-type inequalities hold for Riesz transforms on Wiener space?  For a more precise formulation of these questions, see \cite{Ban1}. 

For the rest of this paper, and following standard terminology (see for example \cite[p.175]{Gra}),  we will say that an operator $T$ acting on the  Schwartz space of rapidly decreasing functions on $\R^{n}$ is a Calder\'on-Zygmund (CZ) operator if it admits a bounded extension to $L^2(\mathbb{R}^n)$ and is of the form 
\begin{equation}
\label{CalZig} Tf(x)= \int_{\mathbb{R}^n} K(x, \tilde{x}) f(\tilde{x})d\tilde{x} 
\end{equation}
where the kernel $K$ satisfies  the following conditions
\begin{align}
\label{size}|K(x,\tilde{x})| &\leq \frac{\kappa}{|x-\tilde{x}|^{n}}\\
\label{smoothness1}|\nabla_x K(x,\tilde{x})| &\leq \frac{\kappa}{|x-\tilde{x}|^{n+1}} \\
\label{smoothness2}|\nabla_{\tilde{x}} K(x,\tilde{x})| &\leq \frac{\kappa}{|x-\tilde{x}|^{n+1}},
\end{align}
for some universal constant $\kappa$ whenever $x\neq \tilde{x}$. (In (\ref{CalZig}) the integral is defined in the principal-value sense if $K(x,\tilde{x})$ is singular along the diagonal $\{x=\tilde{x}\}$. This convention will be used throughout the entirety of this paper.) If there exists a function $\bar{K}$, defined on $\mathbb{R}^n\setminus\{0\}$, so that $\bar{K}(x-\tilde{x})= K(x,\tilde{x})$ for all $x\neq \tilde{x}$, then we say that $T$ is of convolution type. The Hilbert, Riesz, and Beurling-Ahlfors transforms discussed below are basic examples of CZ operators of convolution type which give rise to interesting Fourier multipliers.  It is well known (see for example \cite[p.183]{Gra}) that CZ operators are strong-type $(p, p)$ for $1<p<\infty$ and are weak-type $(1, 1)$.  More precisely, there exists universal constants $C_{p,n,\kappa}$, depending only on $p,n,$ and $\kappa$, such that
\begin{equation}
\label{strongtype} \|Tf\|_p \leq C_{p,n,
\kappa} \|f\|_p,\,\,\,\,\, 1<p<\infty 
\end{equation} 
and 
\begin{equation}
\label{weaktype} |\{x: |Tf(x)| > \lambda\}| \leq \frac{C_{1,n,\kappa}}{\lambda} \|f\|_1.
\end{equation}
    The purpose of this paper is to prove the following theorem.  As we shall see, in the case that $\alpha=1$ or $2$, these operators are the conditional expectations of martingale transforms which were used in \cite{BanWan} and \cite{BanMen} respectively. Background information on $\alpha$-stable processes will be provided at the beginning of the next section.

\begin{theorem}\label{stable} Let $0 < \alpha \leq 2$. Let $(X_t)_{t> 0}$ be a rotationally-invariant (symmetric) $\alpha$-stable process on $\mathbb{R}^n$ and let. 
$\varphi$ denote the density of $X_1$. For $y\geq 0$, let $\varphi_y(x)= \frac{1}{y^n}\varphi(\frac{x}{y})$.
Let  $A(x,y)=(a^{i,j}(x,y))$ be an $(n+1)\times(n+1)$ matrix-valued function with 
\begin{equation}
 \|A\|=\|\sup_{|v|\leq 1}(|A(x,y)v|)\|_{L^\infty(\mathbb{R}^n\times [0,\infty))} <\infty.
\end{equation} 
Further assume that $a^{i,j}(x,y)=a^{i,j}(y)$ is independent of $x$ whenever $i$ or $j=n+1$. 
 Consider the kernel 
\begin{equation}
\label{StableK} K_A(x,\tilde{x}) = \int_0^\infty \int_{\mathbb{R}^n} 2yA(\bar{x},y)\nabla \varphi_y(\bar{x}-\tilde{x})\nabla \varphi_y(\bar{x}-x) d\bar{x}dy,
\end{equation}
where $\nabla = (\partial_{x_1},\ldots,\partial_{x_n},\partial_y)$.
 Then the operator 
\begin{equation*}
T_{A}f(x) = \int_{\mathbb{R}^n} K(x,\tilde{x}) f(\tilde{x})d\tilde{x}\
\end{equation*}
 is a CZ operator. 
\end{theorem}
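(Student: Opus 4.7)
I would verify the three defining properties of a CZ operator separately: the $L^2$-boundedness of $T_A$, the size bound \eqref{size}, and the gradient bounds \eqref{smoothness1}--\eqref{smoothness2}. The overall idea is to decompose $K_A = K^{\mathrm{sp}} + K^{\mathrm{mix}} + K^{\mathrm{tmp}}$ according to whether the entry $a^{i,j}$ of $A$ lies in the spatial $n\times n$ block ($i,j\le n$), in a mixed row/column ($i$ or $j = n+1$ but not both), or in the $(n+1,n+1)$ corner, and treat each piece with the technique that best matches the pointwise behaviour of the relevant derivatives of $\varphi_y$.

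For the $L^2$-boundedness, I would use the identity
\begin{equation*}
\langle T_A f, g \rangle = \int_0^\infty \int_{\R^n} 2y \,\bigl\langle A(\bar x, y)\nabla U(\bar x, y),\, \nabla V(\bar x, y)\bigr\rangle\, d\bar x\,dy,
\end{equation*}
where $U(\bar x, y) = (\varphi_y * f)(\bar x)$ and $V(\bar x, y) = (\varphi_y * g)(\bar x)$. Cauchy--Schwarz in the measure $2y\,d\bar x\,dy$, combined with the Littlewood--Paley square-function identity $\|f\|_2^2 = C_\alpha \int_0^\infty 2y \,\|\nabla U(\cdot, y)\|_2^2\,dy$ for the symmetric $\alpha$-stable semigroup, gives $\|T_A f\|_2 \le C\|A\|\,\|f\|_2$; this is the $L^2$ shadow of the Burkholder martingale-transform inequality used in \cite{BanMen,BanWan}.

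For $K^{\mathrm{sp}}$, I would use the standard decay $|\partial_i\varphi_y(x)| \le C y^{-n-1}(1+|x|/y)^{-n-1-\alpha}$ of the spatial gradient (with one extra power of $(1+|x|/y)^{-1}$ for the Hessian) of a symmetric $\alpha$-stable density. Substituting $\bar x = x + yu$ in the $\bar x$-integral and then $s = |x-\tilde x|/y$ in the $y$-integral reduces the size bound to $|x-\tilde x|^{-n}\int_0^\infty s^{n-1}\Psi(s)\,ds$, where $\Psi$ is bounded near $s=0$ and decays like $s^{-n-1-\alpha}$ at infinity; this converges for every $\alpha \in (0,2]$. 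Differentiating in $x$ or $\tilde x$ inserts one extra factor of $y^{-1}$ and yields $|x-\tilde x|^{-n-1}$.

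The mixed and temporal blocks are the genuine technical point, because $|\partial_y\varphi_y(x)| \le C y^{-n-1}(1+|x|/y)^{-n-\alpha}$ decays one power slower than the spatial gradient, so the naive magnitude estimate fails for small $\alpha$. The hypothesis $a^{i,j}(\bar x, y) = a^{i,j}(y)$ when $i$ or $j = n+1$ is exactly what rescues the argument: it lets one pull those entries outside the $\bar x$-integral, so that the inner integrals become honest convolutions,
\begin{equation*}
K^{\mathrm{tmp}}(x,\tilde x) = \int_0^\infty 2y\, a^{n+1,n+1}(y)\,\bigl(\partial_y\varphi_y * \partial_y \varphi_y\bigr)(x-\tilde x)\,dy,
\end{equation*}
and similarly for $K^{\mathrm{mix}}$. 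The Fourier symbol of $\partial_y\varphi_y * \partial_y\varphi_y$ equals $\alpha^2 y^{2\alpha-2}|\xi|^{2\alpha}e^{-2y^\alpha|\xi|^\alpha}$; rescaling $\xi = \eta/y$ exhibits $\partial_y\varphi_y * \partial_y\varphi_y = C y^{-n-2}H(\cdot/y)$ with a fixed profile $H$ whose Fourier transform is non-smooth of order $2\alpha$ at the origin, yielding $|H(u)| \lesssim (1+|u|)^{-n-2\alpha}$. Inserting this bound in the $y$-integral and applying the same substitution as before gives $|K^{\mathrm{tmp}}(x,\tilde x)| \le C\|A\|\,|x-\tilde x|^{-n}$; differentiating in $x$ merely inserts $i\xi$ in the Fourier symbol, sharpening the decay of $H$ by one power and producing the required $|x-\tilde x|^{-n-1}$. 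The mixed block is handled in exactly the same way with $\eta_i |\eta|^\alpha$ in place of $|\eta|^{2\alpha}$.

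The main obstacle is precisely this step of reconciling the different pointwise decay rates of $|\nabla_x\varphi_y|$ and $|\partial_y\varphi_y|$ while preserving the sharp $r^{-n}$ and $r^{-n-1}$ behaviour. Without the structural hypothesis on the last row and column of $A$ the required kernel estimates could not be obtained by these techniques.
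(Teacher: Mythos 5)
Your proof plan follows the same overall architecture as the paper---verify $L^2$-boundedness via Plancherel, Cauchy--Schwarz and the Littlewood--Paley identity (essentially word-for-word the paper's Lemma~3.2), then establish the size and gradient kernel estimates by a case split driven by the structural hypothesis that the last row and column of $A$ are independent of $\bar x$---but your treatment of the kernel estimates is genuinely different in execution, and in two respects arguably cleaner. For the spatial block the paper substitutes jointly in $(\bar x, y)$, bounds each factor by the pointwise decay estimates of Lemma~3.3, and then must show a two-variable integral over $\mathbb{R}^n\times(0,\infty)$ is finite; this forces the auxiliary parameter $\beta=\min(\alpha,\tfrac12)$ to tame the behaviour near $(t,z)=(0,0)$. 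You instead integrate in $\bar x$ first, producing a one-variable profile $\Psi(s)$ that is automatically bounded near $s=0$ (since $\nabla\varphi\in L^1\cap L^\infty$), so the radial integral $\int_0^\infty s^{n-1}\Psi(s)\,ds$ converges for all $\alpha\in(0,2]$ without any such device. For the mixed and temporal blocks the paper uses the semigroup identity $\varphi_y*\varphi_y=\varphi_{2^{1/\alpha}y}$ to collapse the inner $\bar x$-integral into a single scaled function, and then feeds that into the subordination-based pointwise bounds of Lemma~3.3 (obtained by writing $X_t=B_{T_t}$ with the $\alpha/2$-stable subordinator and using $\eta^{\alpha/2}(1,s)\lesssim s^{-1-\alpha/2}$); you instead stay on the Fourier side, identify the profile $H$ with $\hat H(\eta)=c\,|\eta|^{2\alpha}e^{-2|\eta|^\alpha}$ (resp.\ $\eta_i|\eta|^\alpha e^{-2|\eta|^\alpha}$ for the mixed block), and read off the pointwise decay $|H(u)|\lesssim(1+|u|)^{-n-2\alpha}$ from the algebraic singularity of $\hat H$ at the origin. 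Both routes exploit exactly the same structural fact (the $\bar x$-independence of $a^{i,j}$ when $i$ or $j=n+1$ turns the inner integral into a convolution), and your closing diagnosis that this hypothesis is what rescues the estimate matches the paper's remark at the end of Section~3. The one caveat is that your write-up asserts, rather than proves, the pointwise decay of $\nabla\varphi$, $\nabla^2\varphi$ and of $H$; the paper devotes Lemma~3.3 to the former, and the latter (decay of the inverse Fourier transform of a function with an isolated $|\eta|^{\gamma}$-type singularity, smooth and rapidly decaying elsewhere) is standard but would need to be cited or proved; also the Littlewood--Paley statement is a two-sided comparability rather than a literal identity once the $\partial_y$-component is included, though only the upper bound is used. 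None of these are gaps in the argument, just details to be supplied.
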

\begin{remark}\label{densities}
If we make the additional assumption that $a^{i,j}(y)=0$ whenever $i$ or $j=n+1$, we may also write our kernel in terms of the density of $X_t$, which we denote $\psi_t$. It is well known (see e.g. \cite{Ber}) that $\psi_t$ obeys the scaling relation $\psi_t(x) = \frac{1}{t^{n/ \alpha}}\psi(\frac{x}{t^{1/ \alpha}})$ which implies $\varphi_{t^{1/ \alpha}}=\psi_t$. Therefore, after a simple change of variables we see that 
\begin{equation}
K_A(x,\tilde{x}) = \int_0^\infty \int_{\mathbb{R}^n} \frac{2}{\alpha}t^{\frac{2}{\alpha}-1}A(\bar{x},t^{1/\alpha})\nabla \psi_t(\bar{x}-\tilde{x})\nabla \psi_t(\bar{x}-x) d\bar{x}dt.
\end{equation}
The reason why we need the assumption that  $a^{i,j}(y)=0$ whenever $i$ or $j=n+1$ is because these entries correspond to ``vertical'' derivatives with respect to the dilation parameter $t$, and the change of variables $y=t^{1/\alpha}$ does not commute with the taking of vertical derivatives.  
\end{remark}

The rest of this paper is organized as follows. In section two, we give background information on CZ operators, martingale transforms, and the connection between the two topics as well as a brief introduction to L\'evy processes. In section three, we give the proof of theorem \ref{stable}, and in section four, we give our closing remarks and discuss additional properties of CZ operators.   

\section{Preliminaries}

We start with a brief introduction to an important class of L\'evy processes called rotationally invariant $\alpha-$stable processes. Recall that a L\'evy process on $\mathbb{R}^n$ is a stochastically continuous process, $(X_t)_{t\geq 0}$, with stationary and independent increments such that $X_0=0$ a.s. By stochastic continuity we mean that for every $\epsilon > 0$,
\begin{equation*}
\lim_{t\searrow 0} \mathbb{P}(|X_t| > \epsilon ) =0.
\end{equation*}
The celebrated L\'evy-Khintchine formula says that if $X_t$ a L\'evy process, its characteristic function is given by $\mathbb{E}(e^{i\xi\cdot X_t}) = e^{t\rho(\xi)}$ where 
\begin{equation*}
\rho(\xi) = ib\cdot \xi - \frac{1}{2} B\xi \cdot \xi + \int_{\mathbb{R}^n} (e^{i \xi\cdot y} - 1 - i(\xi\cdot y)\mathbb{I}_{(|y|\leq 1)}) d\nu (y)
\end{equation*}
with $b\in \mathbb{R}^n$, $B$ a symmetric non-negative $n\times n$ matrix, and $\nu$ a measure satisfying $\nu(\{0\})=0$ and 
\begin{equation*}
\int_{\mathbb{R}^n} \frac{|y|^2}{|y|^2+1} d\nu(y)<\infty.
\end{equation*}

For $0<\alpha\leq 2$, $\rho(\xi) = -|\xi|^\alpha$ gives the rotationally invariant $\alpha-$stable processes.
In the case that $\alpha=2$, $(X_t)_{t\geq 0}$ is Brownian motion (running at twice the usual speed), and the density of $X_t$ is given by 
\begin{equation}
\label{BM}\frac{1}{(4\pi t)^{n/2}}e^{-|x|^2/4t}.
\end{equation}
 If $\alpha=1$, then $(X_t)_{t\geq 0}$ is the Cauchy process and the density of $X_t$ is given by 
\begin{equation}
\label{CP}\frac{\Gamma(\frac{n+1}{2})}{\pi^{(n+1)/2}}\frac{t}{(|x|^2+t^2)^{(n+1)/2}}.
\end{equation}  
 For further background on L\'evy processes we refer the reader to \cite{Ber}, \cite{Blu}, and \cite{Sato}.

We now consider the basic examples of CZ operators which arise as projections of martingale transforms, the Riesz and Beurling-Ahlfors transforms.   For $f\in L^p(\mathbb{R}^n)$, we define the Riesz transform in direction $j$, $1\leq j\leq n,$ by
\begin{equation*}
R_jf(x)=  \frac{\Gamma(\frac{n+1}{2})}{\pi^{(n+1)/2}} \int_{\mathbb{R}^n}\frac{x_j-\tilde{x}_j}{|x-\tilde{x}|^{n+1}}f(\tilde{x})d\tilde{x}.
\end{equation*}
When $n=1$, the Riesz transform is called the Hilbert transform.  Likewise, for $f\in L^p(\mathbb{C})$, we define the Beurling-Ahlfors operator by
\begin{equation*}
Bf(z)=-\frac{1}{\pi} \int_{\mathbb{C}} \frac{f(w)}{(z-w)^2}dw. 
\end{equation*}
(As in (\ref{CalZig}), the above integrals are defined in the principal-value sense.)
These operators are Fourier multipliers with
\begin{equation*}
\widehat{R_jf}(\xi) = m_j(\xi)\widehat{f}(\xi) \quad\text{and}\quad \widehat{Bf}(\xi) = m_B(\xi)\widehat{f}(\xi),
\end{equation*}
where $m_j(\xi) = \frac{i\xi_j}{|\xi|}$ and $m_B(\xi) = \frac{\xi_1^2-\xi_2^2 - 2i\xi_1\xi_2}{|\xi|^2}.$ Therefore, we can decompose the Beurling-Ahlfors transform as
\begin{equation}
\label{decomp} B = R^2_2 - R^2_1 + 2iR_1R_2.
\end{equation}

In \cite{Ste1} Stein showed that for the Riesz transform, the constant in (\ref{strongtype}) can be taken to be independent of the dimension, $n$. 
Gundy and Varopoulos showed in \cite{GunVar} that the Riesz transforms could be interpreted probabilistically as projections of martingale transforms, and from this it again follows that the constant may be taken to be independent of dimension. See  \cite{Ban1} for more on this topic.  These techniques were further explored by Ba\~nuelos and Wang in \cite{BanWan} to prove the sharp inequalities
\begin{equation*}
\| R_jf\|_p \leq C_p\|f\|_p \quad\text{and}\quad \| ((R_jf)^2+f^2)^{1/2}\|_p \leq \sqrt{C_p^2+1} \|f\|_p,
\end{equation*}
where 
\begin{equation*}
p^* = \max\left\{p,\frac{p}{p-1}\right\},\,\,\,\,  \text{and}\,\,\,\, C_p = \cot\left(\frac{\pi}{2p^*}\right).
\end{equation*}
 The first inequality had been proved earlier by Iwaniec and Martin in \cite{IwaMar} using the method of rotations. 

In \cite{Leh}  Lehto showed that the best possible constant in (\ref{strongtype}) for Beurling-Ahlfors transform is at least $(p^*-1)$. Iwaniec conjectured in \cite{Iwa} that it is exactly $(p^*-1)$. In \cite{BanWan} it was shown that the best possible constant for the Beurling-Ahlfors transform is at most $4(p^*-1)$. This constant was reduced to $2(p^*-1)$ by Nazarov and Volberg in \cite{NazVol} using a Littlewood-Paley inequality proved using Bellman functions techniques. The Bellman function in \cite{NazVol} is itself constructed from Burkholder martingale inequalities. In \cite{BanMen} the martingale techniques from \cite{BanWan} are applied to space-time Brownian motion to reproduce the bound $2(p^*-1)$. These martingale  methods were refined in \cite{BanJan} to reduce this constant to $1.575(p^*-1)$,  which is the best known bound as of now valid for all $1<p<\infty$.  We do point out that for $1000<p<\infty$, this bound was improved to $1.4(p^*-1)$ in \cite{BorJanVol}. 
 
With the exception of \cite{NazVol}, the basic idea for the above results is to embed $L^p(\mathbb{R}^n)$ into $\mathbb{M}^p$, the space of p-integrable martingales,  apply a martingale transform, and project back onto $L^p(\mathbb{R}^n)$. This ``factorization" of the operators ``lifts" all the analysis to the martingale setting. We will now give a brief description of this process starting with some background information on martingale transforms and their bounds.  

Let $(X_t)_{t\geq 0}$ be a martingale adapted to the Brownian filtration. Then, we may find an $\mathbb{R}^n$-valued predictable process $H_s$ such that 
\begin{equation*} 
X_t = X_0 + \int_0^t H_s \cdot dB_s,
\end{equation*}
where $B_s$ is a standard Brownian motion.
For an $n \times n$ matrix-valued function, $A(x)$, we define a new martingale 
\begin{equation*}
(A* X)_t =  \int_0^t A(X_s)H_s \cdot dB_s,
\end{equation*}
which we call the martingale transform of $X$ by $A$. In \cite{BanWan} Ba\~nuelos and Wang applied the Burkholder inequalities \cite{Bur2,Bur3} to show
\begin{equation*}\label{Burk1}
\|A*X\|_p \leq \|A\| (p^*-1) \|X\|_p,\,\,\,\,\, 1<p<\infty,
\end{equation*}
where $\|A\| = \|\sup_{|v|\leq 1}|A(x)v|\|_{L^\infty(\mathbb{R}^n)}$ and the norm of a martingale $M_t$ is defined by$\|M\|_p = \sup_{t\geq 0} \|M_t\|_p$. This bound is sharp. 

We now consider how to embed $L^p(\mathbb{R}^n)$ into the space of martingales and project back to $L^p(\mathbb{R}^n)$ using the method developed by Gundy and Varopoulos in \cite{GunVar} and used by Ba\~nuelos and Wang in \cite{BanWan}. Let 
\begin{equation}
p_y(x) = \frac{\Gamma(\frac{n+1}{2})}{\pi^{(n+1)/2}}\frac{y}{(|x|^2+y^2)^{(n+1)/2}}
\end{equation}
be the Poisson kernel for the upper half-space, $\mathbb{R}^{n+1}_+,$ and for $f(x)\in C^\infty_0(\mathbb{R}^n)$, let $(p_y*f)(x) = u_f(x,y)$.  (Note that by (\ref{CP}) $p_y$ is the density of the Cauchy process at time $y$.) Background radiation is a ``time-reversed Brownian motion,'' $(B_t)_{t\leq 0},$ taking values in $\mathbb{R}^{n+1}_+$ such that $B_{-\infty}$ has distribution given by the Lebesgue measure on $\mathbb{R}^n \times \{\infty\}$, and $B_0$ is distributed by the Lebesgue measure on $\mathbb{R}^n\times \{0\}$. We write $B_t=(X_t,Y_t)$ with $X_t$ taking values in $\mathbb{R}^n$ and $Y_t >0$.  The standard rules of stochastic calculus, in particular Ito's formula, hold for the background radiation process. Therefore, $u_f(X_t,Y_t)$ is a martingale and 

\begin{equation*}
f(X_0) = u_f(B_0) =   \int_{-\infty}^0 \nabla u_f(X_s,Y_s) \cdot dB_s,
\end{equation*} 
where $\nabla = (\partial_{x_1},\ldots,\partial_{x_n},\partial_y).$ This allows us to define the martingale transform of $f$ by an $(n+1) \times (n+1)$ matrix-valued function, $A(x,y)$, as 

\begin{equation}
\label{A*f} (A*f) = \int_{-\infty}^0 A(X_s,Y_s) \nabla u_f(X_s,Y_s) \cdot dB_s.
\end{equation}

The random variable $A*f$ is not a function of the endpoint, $X_0$. This motivates us to define a projection operator by averaging the integral in  $(\ref{A*f})$ over all paths ending at $x$, that is,

\begin{equation*}
T_Af(x) = E\left(\int_{-\infty}^0 A(X_s,Y_s) \nabla u_f(X_s,Y_s) \cdot dB_s|X_0 = x\right).
\end{equation*}
 It is known (see \cite{Ban1}) that $E(|(f(B_0)|^p) = \int_{\mathbb{R}^n}|f(x)|^pdx$, which implies  $$\sup_{t\geq 0} \|u_f(B_t)\|_p  = \|f\|_p$$ since $|u_f(B_t)|^p$ is a submartingale. In other words, lifting $f\in L^p(\mathbb{R}^n)$ to the space of martingales does not change its norm. Combining this with the fact that conditional expectation is a contraction in $L^p(\mathbb{R}^n)$, we see that the operator norm of $T_A$ is the same as the operator norm of the martingale transform $X \rightarrow A*X$. Thus, we have 

\begin{equation*}
\|T_Af(x)\|_p \leq (p^*-1)\|A\|\|f\|_p.
\end{equation*}

 It is known (see for example \cite{Bur2}) that martingale transforms are weak-type $(1, 1)$ and in fact  we have the sharp inequality
 $$
 P\{|A*X| >\lambda\}\leq \frac{2\|A\|}{\lambda}\|X\|_1.
 $$ 
  Unfortunately, this does not give us information about the weak-type behavior of $T_A$ because weak-type inequalities are not preserved under conditional expectation. Therefore, we seek to represent $T_A$ as a purely analytic operator by finding a kernel $K_A(x,\tilde{x})$ such that 
  $$T_Af(x) = \int_{\mathbb{R}^n} K_A(x,\tilde{x})f(\tilde{x}) d\tilde{x}.$$ 
  
Let $f,g \in C^\infty_0(\mathbb{R}^n)$ and note that 
\begin{equation*}
g(B_0) = \int_{-\infty}^0 \nabla U_g(B_s) \cdot dB_s
\end{equation*}
by Ito's formula.
Therefore, using basic facts about the covariation of stochastic integrals and the occupation time formula for the background radiation process, (see \cite[p.31 and 57]{Dur} and \cite{GunVar})

\begin{align}
\int_{\mathbb{R}^n} T_Af(x)g(x) dx &= \int_{\mathbb{R}^n}  E\left( \int_{-\infty}^0 A(X_s,Y_s) \nabla u_f(X_s,Y_s) \cdot dB_s|X_0 = x\right)g(x) dx\nonumber\\
&= E\left( \int_{-\infty}^0 A(X_s,Y_s) \nabla u_f(X_s,Y_s) \cdot dB_s g(B_0)\right)\nonumber\\
&= E\left(\int_{-\infty}^0 A(X_s,Y_s) \nabla u_f(X_s,Y_s) \cdot dB_s \int_{-\infty}^0 \nabla u_g(B_s) \cdot dB_s\right)\nonumber\\
&= E\left(\int_{-\infty}^0 A(X_s,Y_s) \nabla u_f(X_s,Y_s) \cdot  \nabla u_g(B_s)  ds\right)\nonumber\\
\label{IPrep}&= \int_0^\infty\int_{\mathbb{R}^n} 2y  A(x,y) \nabla u_f(x,y) \cdot  \nabla u_g(x,y) dxdy.
\end{align}
Using the fact that $\nabla u_f(x,y) = ((\nabla p_y) * f)(x)$ and applying Fubini's theorem, we see that we have
\begin{equation*}
\label{PoissonK} K_A(x,\tilde{x}) = \int_0^\infty \int_{\mathbb{R}^n} 2yA(\bar{x},y)\nabla p_y(\bar{x}-\tilde{x})\nabla p_y(\bar{x}-x) d\bar{x}dy.
\end{equation*} 
Note that this is the kernel from (\ref{StableK}) with $\alpha=1.$

If we define $A_j = (a^j_{l,m})$ by 
 \begin{displaymath}
   a^j_{l,m} = \left\{
     \begin{array}{lr}
       1 & l=n+1,\:m=j\\
       -1 &  l=j, \: m=n+1\\
0&\text{otherwise}
     \end{array}
   \right\},
\end{displaymath} 
then plugging into (\ref{IPrep}) and Fourier transforming shows that $T_{A_j} = R_j$. In fact, if $A$ is any matrix with constant coefficients, $T_A$ will be a linear combination of first and second order Riesz transforms and the identity. Moreover, if $A(x,y) = A(y)$ is independent of $x$ and $\|A\|<\infty$, then $T_A$ is a Fourier multiplier.
 
The approach of \cite{BanMen} is similar, but uses space-time Brownian motion and the heat kernel for the one-half Laplacian,
 \begin{equation}
h_t(x) = \frac{1}{(2\pi t)^{n/2}}e^{-|x|^2/2t},
\end{equation}
instead of background radiation and the Poisson kernel. (We remark that $h_t$  is the density of a standard Brownian motion at time $t$. Observe that this is, up to a simple time change, $t=2s$, the density of the stable process given in (\ref{BM}).) Fix $T>0$, and let $Z_t = (B_t,T-t)$ for $0<t<T$ where $B_t$ is Brownian motion on $\mathbb{R}^n$ with initial distribution given by the Lebesgue measure. We now let $u_f(x,t)$ denote the extension of $f$ to the upper half-space by convolution with $h_t$. Ito's formula shows that $u_f(Z_t)$ is a martingale. 
For an $n\times n$ matrix-valued function, $A(x,t)$, we define a martingale transform and a projection operator by 
\begin{align*}
A \ast f &= \int_{0}^T A(B_s,T-s)\nabla_x u_f(B_s,T-s) \cdot dB_s
\end{align*}
and 
\begin{align*}
S^T_Af(x) &= E(A \ast f|B_0 = (x,0)).
\end{align*}
It is shown in \cite{BanMen} that $\lim_{T\rightarrow \infty} S^T_A = S_A$ exists in $L^2(\mathbb{R}^n)$. Moreover,

\begin{equation}
\label{strong}\|S_Af(x)\|_p \leq (p^*-1)\|A\|\|f\|_p.
\end{equation}

If $A^{(i,j)}$ is defined by
\begin{displaymath}
   a^{(i,j)}_{l,m} = \left\{
     \begin{array}{lr}
       -1 & l=i\:m=j\\
0&\text{otherwise}
     \end{array}
   \right\},
\end{displaymath} 
 then $S_A$ is the second order Riesz transform, $R_iR_j.$ By (\ref{decomp}) this easily leads us to the conclusion that (\ref{strongtype}) holds for the Beurling-Ahlfors transform with constant $2(p^*-1)$. As with the projection operators arising from background radiation, if $A(x,y) = A(y)$ is independent of $x$, then $S_A$ is a Fourier multiplier. Furthermore, we may again find a kernel so that $$S_Af(x) = \int_{\mathbb{R}^n} \calK_A(x,\tilde{x})f(\tilde{x}) d\tilde{x},$$ where 

\begin{equation*}
\label{HeatK} \calK_A(x,\tilde{x}) = \int_0^\infty \int_{\mathbb{R}^n} A(\bar{x},t)\nabla_x h_t(\bar{x}-\tilde{x})\nabla_x h_t(\bar{x}-x) d\bar{x} dt.
\end{equation*}
In light of remark $\ref{densities}$, we see that this is (up to multiplication by a constant) the kernel from (\ref{StableK}) in the case that $\alpha=2$ and $a^{i,j}(x,y)=0$ whenever either $i$ or $j=n+1$. Therefore, the operators considered in theorem \ref{stable} include  the operators from \cite{BanMen} and many of the operators considered in \cite{BanWan}. (In \cite{BanWan} all entries of the matrix, $A(x,y)=(a^{i,j}(x,y)),$ are allowed to depend on both $x$ and $y$, even when $i$ or $j=n+1$. Depending on the choice of $A$, $T_A$ may not satisfy (\ref{smoothness1}) and (\ref{smoothness2}) in that case.)

\section{The Proof of Theorem \ref{stable}}

\begin{proof}
We need to verify that $T_A$ is bounded on $L^2(\mathbb{R}^n)$ and that $K_A$ satisfies the estimates (\ref{size}), (\ref{smoothness1}), and (\ref{smoothness2}). From the definition of $T_A,$ we observe that  (\ref{smoothness1}) and (\ref{smoothness2}) are equivalent.
\begin{lemma} \label{LL2} $T_A$ is bounded on $L^2(\mathbb{R}^n)$. In particular, there exists a constant $C_{n,\alpha}$, depending only on $n$ and $\alpha$, such that for all $f\in C_0^\infty(\mathbb{R}^n)$
\begin{equation}
\| T_A f\|_2 \leq C_{n,\alpha}\|A\|\|f\|_2 .
\end{equation}
\end{lemma}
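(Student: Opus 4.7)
The plan is to establish $L^2$ boundedness by duality: I will show that for $f,g \in C_c^\infty(\mathbb{R}^n)$,
\begin{equation*}
  \left|\int_{\mathbb{R}^n} T_A f(x)\, g(x)\,dx\right| \leq C_{n,\alpha} \|A\|\, \|f\|_2\,\|g\|_2,
\end{equation*}
which by density yields the lemma. Write $u_f(x,y) = (\varphi_y * f)(x)$ and $u_g(x,y) = (\varphi_y * g)(x)$ for the $\alpha$-stable harmonic extensions of $f$ and $g$ to the upper half-space. Starting from the formula for $K_A$ in (\ref{StableK}) and applying Fubini (justified once the integrand is seen to be absolutely integrable, as will be checked below), the bilinear pairing reorganizes into
\begin{equation*}
  \int_{\mathbb{R}^n} T_A f(x)\,g(x)\,dx
  = \int_0^\infty \!\!\int_{\mathbb{R}^n} 2y\, A(\bar x, y)\nabla u_f(\bar x, y)\cdot \nabla u_g(\bar x, y)\, d\bar x\,dy,
\end{equation*}
since $\int_{\mathbb{R}^n} \nabla\varphi_y(\bar x - \tilde x)\,f(\tilde x)\,d\tilde x = \nabla u_f(\bar x, y)$ (and similarly for $g$, with $x$ in place of $\tilde x$).

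Next, use $|A(\bar x,y) v| \leq \|A\|\,|v|$ and Cauchy--Schwarz in $(\bar x,y)$ with the measure $2y\,d\bar x\,dy$ to reduce matters to the Littlewood--Paley $g$-function estimate
\begin{equation*}
  G(f)^2 := \int_0^\infty \!\!\int_{\mathbb{R}^n} 2y\, |\nabla u_f(\bar x, y)|^2\, d\bar x\,dy \leq C_{n,\alpha}^2 \|f\|_2^2.
\end{equation*}
This reduces the lemma to verifying such a square-function bound for the $\alpha$-stable semigroup.

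For the square-function bound I will use Plancherel. Since $\varphi$ is the density of $X_1$ and $\rho(\xi) = -|\xi|^\alpha$, we have $\widehat{\varphi_y}(\xi) = e^{-y^\alpha |\xi|^\alpha}$, and therefore $\widehat{\partial_{x_j}u_f}(\xi,y) = i\xi_j\, e^{-y^\alpha|\xi|^\alpha}\hat f(\xi)$ while $\widehat{\partial_y u_f}(\xi,y) = -\alpha y^{\alpha-1}|\xi|^\alpha e^{-y^\alpha|\xi|^\alpha}\hat f(\xi)$. Plancherel converts $G(f)^2$ to
\begin{equation*}
  \int_{\mathbb{R}^n} \left[\int_0^\infty 2y\bigl(|\xi|^2 + \alpha^2 y^{2\alpha-2}|\xi|^{2\alpha}\bigr) e^{-2y^\alpha|\xi|^\alpha}\,dy\right] |\hat f(\xi)|^2\, \frac{d\xi}{(2\pi)^n}.
\end{equation*}
Substituting $u = y|\xi|$ in the inner integral collapses all $\xi$-dependence, leaving the constant
\begin{equation*}
  C_\alpha := 2\int_0^\infty u\, e^{-2u^\alpha}du + 2\alpha^2\int_0^\infty u^{2\alpha-1} e^{-2u^\alpha} du,
\end{equation*}
which is finite for every $0 < \alpha \leq 2$. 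Thus $G(f)^2 = C_\alpha \|f\|_2^2$ by Plancherel, and combining the pieces gives the desired estimate with constant $2\sqrt{C_\alpha}\,\|A\|$.

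The only delicate point is to justify the Fubini rearrangement leading to the bilinear form. With $f,g \in C_c^\infty$, the extensions $u_f,u_g$ are smooth and bounded on the upper half-space with enough decay for the $y$-integrals to converge uniformly on compact sets of $(x,\tilde x)$; the global absolute convergence of the bilinear integral follows from Cauchy--Schwarz together with the finiteness of $G(f)^2$ and $G(g)^2$ that is established independently by the Plancherel computation above. Once this is in place, the rest is bookkeeping, and the $L^2$ bound extends from $C_c^\infty$ to all of $L^2(\mathbb{R}^n)$ by density, completing the proof of the lemma.
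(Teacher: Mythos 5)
Your proof is correct and follows essentially the same route as the paper's: dualize, reorganize to a bilinear form in the extensions $u_f, u_g$, apply Cauchy--Schwarz to reduce to the Littlewood--Paley $g$-function, and prove the square-function bound via Plancherel with the scaling substitution $u = y|\xi|$. The only cosmetic differences are that you compute $\widehat{\partial_y u_f}$ exactly rather than via the chain-rule estimate the paper uses, yielding the exact identity $G(f)^2 = C_\alpha \|f\|_2^2$ (and your final constant should read $C_\alpha\|A\|$ rather than $2\sqrt{C_\alpha}\|A\|$, a harmless bookkeeping slip).
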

\begin{proof}
Let $f,g\in C^\infty_0(\mathbb{R}^n)$.We will show that 
\begin{equation*}
\left|\int_{\mathbb{R}^n} T_Af(x)g(x)dx\right|\leq C_{n,\alpha}\|A\|\|f\|_2\|g\|_2.
\end{equation*}
Letting $u_f$ and $u_g$ denote $\varphi_y*f$ and $\varphi_y*g$ respectively,
\begin{align*}
&\left|\int_{\mathbb{R}^n} T_Af(x)g(x)dx\right|\\
 & = \left|\int_{\mathbb{R}^n} \int_{\mathbb{R}^n} K_A(x,\tilde{x})f(\tilde{x})g(x)d\tilde{x}dx\right|\\
&= \left|\int_{\mathbb{R}^n} \int_{\mathbb{R}^n} \int_0^\infty \int_{\mathbb{R}^n} 2y A(\bar{x},y) \nabla \varphi_y(\bar{x}-\tilde{x})\cdot \nabla \varphi_y(\bar{x}-x) f(\tilde{x}) g(x) d\bar{x}dyd\tilde{x}dx\right|\\
&= \left|\int_0^\infty\int_{\mathbb{R}^n}2yA(\bar{x},y)\int_{\mathbb{R}^n}\nabla \varphi_y(\bar{x}-\tilde{x})f(\tilde{x})d\tilde{x}\cdot \int_{\mathbb{R}^n}\nabla \varphi_y(\bar{x}-x)g(x)dxd\bar{x}dy\right|\\
&= \left|\int_0^\infty\int_{\mathbb{R}^n}2yA(\bar{x},y)\nabla u_f(\bar{x},y)\cdot \nabla u_g(\bar{x},y) d\bar{x}dy\right|\\
&\leq 2\|A\|\int_0^\infty \int_{\mathbb{R}^n}y^{1/2}|\nabla u_f(x,y)| y^{1/2}|\nabla u_g(x,y)|dxdy.
\end{align*}
 Now by the Cauchy-Schwartz inequality and Holder's inequality,
\begin{align*}
&\int_0^\infty \int_{\mathbb{R}^n}y^{1/2}|\nabla u_f(x,y)| y^{1/2}|\nabla u_g(x,y)|dxdy\\
&\leq \int_0^\infty \left(\int_{\mathbb{R}^n}y|\nabla u_f(x,y)|^2dx\right)^{1/2} \left(\int_{\mathbb{R}^n}y|\nabla u_g(x,y)|^2dx\right)^{1/2}dy\\
&\leq \left(\int_0^\infty \int_{\mathbb{R}^n}y|\nabla u_f(x,y)|^2dxdy\right)^{1/2}\left(\int_0^\infty \int_{\mathbb{R}^n}y|\nabla u_g(x,y)|^2dxdy\right)^{1/2}.
\end{align*}
The proof will be complete once we show that 
\begin{equation*}
\left(\int_0^\infty y\int_{\mathbb{R}^n}|\nabla u_f(x,y)|^2dxdy\right) \leq C_{n,\alpha}\|f\|^2_2.
\end{equation*}
Since $\varphi$ is the density of $X_1$, which has characteristic function $\mathbb{E}(e^{iX_1\xi})= e^{-|\xi|^\alpha}$, we have that $\widehat{\varphi(\xi)} = e^{-(2\pi|\xi|)^\alpha}.$ Therefore, we may apply Plancherel's theorem, use the scaling relation for the Fourier transform, and substitute $t=y|\xi|$, to see that
\begin{align*}
\int_0^\infty y\int_{\mathbb{R}^n}|\nabla_xu_f(x,y)|^2dxdy &= \int_0^\infty y\int_{\mathbb{R}^n} 4\pi^2|\xi|^2 |\widehat{\varphi_y}(\xi)|^2|\widehat{f}(\xi)|^2 d\xi dy\\
&=C \int_0^\infty y \int_{\mathbb{R}^n} |\xi|^2|\widehat{\varphi}(\xi y)|^2 |\widehat{f}(\xi)|^2d\xi dy\\
&= C \int_0^\infty t \int_{\mathbb{R}^n} |\widehat{\varphi}(\xi' t)|^2|\widehat{f}(\xi)|^2d\xi dt\\
&= C \int_{\mathbb{R}^n}|\widehat{f}(\xi)|^2 \int_0^\infty te^{-2(2\pi t)^\alpha} dt d\xi\\
&\leq C_{n,\alpha} \int_{\mathbb{R}^n}|\widehat{f}(\xi)|^2 d\xi = C_{n,\alpha}\|f\|^2_2.
\end{align*}
Likewise,
\begin{align*}
\int_0^\infty y\int_{\mathbb{R}^n}|\partial_y u_f(x,y)|^2dxdy &= \int_0^\infty y\int_{\mathbb{R}^n}  |\partial_y\widehat{\varphi_y}(\xi)|^2|\widehat{f}(\xi)|^2 d\xi dy\\
&=C \int_0^\infty y \int_{\mathbb{R}^n}  |\partial_y \widehat{\varphi}(\xi y)|^2 |\widehat{f}(\xi)|^2d\xi dy\\
&= C \int_0^\infty y \int_{\mathbb{R}^n} |\xi \cdot \nabla \widehat{\varphi}(\xi y)|^2 |\widehat{f}(\xi)|^2d\xi dy\\
&\leq C \int_0^\infty y \int_{\mathbb{R}^n} |\xi|^2 |\nabla \widehat{\varphi}(\xi y)|^2 |\widehat{f}(\xi)|^2d\xi dy\\
&\leq C \int_{\mathbb{R}^n} |\xi|^4 \int_0^\infty y^3|\widehat{\varphi}(\xi y)|^2 |\widehat{f}(\xi)|^2 dy d\xi\\
&= C\int_{\mathbb{R}^n}|\widehat{f}(\xi)|^2 \int_0^\infty t  |\widehat{\varphi}(\xi' t)|^2dtd\xi \leq C_{n,\alpha}\|f\|^2_2.
\end{align*}
\end{proof}

Now that we know $T_A$ is bounded on $L^2(\mathbb{R}^n)$, we will show that it is, in fact, a CZ operator. It suffices to show that $K^{i,j}_A$ satisfies (\ref{size}) and (\ref{smoothness1}) for $1\leq i,j,\leq n+1$ where
\begin{equation}
 K^{i,j}_A(x,\tilde{x}) = \int_0^\infty \int_{\mathbb{R}^n} 2ya^{i,j}(\bar{x},y)\partial_{x_i} \varphi_y(\bar{x}-\tilde{x})\partial_{x_j} \varphi_y(\bar{x}-x) d\bar{x}dy.
\end{equation}
The following lemma will be used to see that certain integrals converge.

\begin{lemma}\label{Lbounds}
There exists a constant $C_{n,\alpha}$, depending only on $n$ and $\alpha,$ such that for all $x\in \mathbb{R}^n$, $1\leq i,j \leq n$,
\begin{align}
 \label{decay0}|\varphi(x)| &\leq \frac{C_{n,\alpha}}{(1+|x|^2)^{(n+\alpha)/2}}\\
\label{decay1}|\partial_{x_i}\varphi(x)| &\leq \frac{C_{n,\alpha}|x|}{(1+|x|^2)^{(n+2+\alpha)/2}} \leq \frac{C_{n,\alpha}}{(1+|x|^2)^{(n+1+\alpha)/2}}
\end{align}
and
\begin{equation}
\label{decay2}|\partial_{x_i}\partial_{x_j}\varphi(x)| \leq  \frac{C_{n,\alpha}}{(1+|x|^2)^{(n+2+\alpha)/2}}.
\end{equation}
\end{lemma}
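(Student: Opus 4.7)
The plan is to establish each of \eqref{decay0}--\eqref{decay2} in the two regimes $|x| \leq 1$ and $|x| \geq 1$ using the subordination representation of the symmetric $\alpha$-stable density. For $0 < \alpha < 2$,
\[
\varphi(x) = \int_0^\infty \frac{1}{(4\pi s)^{n/2}}\, e^{-|x|^2/(4s)}\, \eta_\alpha(s)\, ds,
\]
where $\eta_\alpha$ is the density at time $1$ of the one-sided $(\alpha/2)$-stable subordinator; the two facts about $\eta_\alpha$ that I will use are that it decays faster than any polynomial as $s \to 0^+$ and satisfies $\eta_\alpha(s) \leq C s^{-1-\alpha/2}$ as $s \to \infty$. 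The endpoint cases $\alpha = 1$ and $\alpha = 2$ are handled directly from the explicit Cauchy and Gaussian formulae \eqref{CP} and \eqref{BM}.

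Differentiating under the integral produces an additional factor $-x_i/(2s)$ for the first derivative and $x_ix_j/(4s^2) - \delta_{ij}/(2s)$ for the second. For $|x| \geq 1$ I would split each integral at $s = |x|^2$: on $s \leq |x|^2$ the Gaussian $e^{-|x|^2/(4s)}$ supplies super-polynomial decay in $|x|$ (change variables $u = |x|^2/(4s)$ to make this transparent), while on $s \geq |x|^2$ the tail bound on $\eta_\alpha$ reduces the estimate to elementary one-variable power-law integrals, producing the decay rates $|x|^{-n-\alpha}$, $|x|\cdot |x|^{-n-2-\alpha}$, and $|x|^{-n-2-\alpha}$ respectively. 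For $|x| \leq 1$ the weights $(1+|x|^2)^{-m}$ are bounded above and below by positive constants, so each bound reduces to a uniform estimate on the corresponding derivative; dropping $e^{-|x|^2/(4s)} \leq 1$ and applying the two properties above, the remaining integrals in $s$ all converge, and the extra factor of $|x|$ appearing in \eqref{decay1} is supplied by the explicit $-x_i/(2s)$ pulled outside the integral.

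The main obstacle I anticipate is the sharper form of \eqref{decay1}, with $|x|$ genuinely in the numerator rather than absorbed into the denominator. This strengthening is what will later be needed to derive the kernel smoothness estimates \eqref{smoothness1} and \eqref{smoothness2} for $K_A$, and it forces one to explicitly extract the $x_i$ factor from the subordination integral before estimating the remainder by $(1+|x|^2)^{-(n+2+\alpha)/2}$. Once this is done the split-integral computation is routine, but the bookkeeping of exponents must be carried out separately and carefully in each of the three cases to make sure the Gaussian factor and the polynomial tail of $\eta_\alpha$ balance exactly to the claimed weight.
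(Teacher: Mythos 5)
Your overall framework agrees with the paper's: both use the subordination $X_t = B_{T_t}$ to represent $\varphi$ as a Gaussian mixture against the $(\alpha/2)$-stable subordinator density $\eta_{\alpha/2}$, and both derive the decay of $\varphi$ and its derivatives from bounds on $\eta_{\alpha/2}$. The two proofs diverge at two steps, one a matter of taste and one a genuine gap.

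\emph{Near-origin regime.} You extract $x_i/(2s)$ and control the remaining integral using super-polynomial decay of $\eta_{\alpha/2}(s)$ as $s\to 0^+$. This is correct but is a heavier input than the paper uses. The paper gets the $|x|$ factor in \eqref{decay1} for free from the Fourier inversion representation $\varphi(x)=\int e^{-ix\cdot\xi}e^{-|\xi|^\alpha}\,d\xi$ together with the cancellation $\int \xi_i e^{-|\xi|^\alpha}\,d\xi=0$, which yields
\[
|\partial_{x_i}\varphi(x)| = \Bigl|\int \xi_i\bigl(e^{-ix\cdot\xi}-1\bigr)e^{-|\xi|^\alpha}\,d\xi\Bigr| \le 2|x|\int |\xi|^2 e^{-|\xi|^\alpha}\,d\xi,
\]
using nothing about $\eta_{\alpha/2}$ at all. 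Your route works; it just needs a citable statement of the small-$s$ decay of $\eta_{\alpha/2}$, which the paper's argument avoids.

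\emph{Far-field regime.} Here the split at $s=|x|^2$, with the claim that ``the Gaussian supplies super-polynomial decay in $|x|$'' on $s\le|x|^2$, does not hold up. After $u=|x|^2/(4s)$ that piece becomes $u\ge 1/4$, where $e^{-u}$ is merely bounded by $e^{-1/4}$ near the endpoint and gives no decay in $|x|$. Concretely, taking the supremum of $s^{-n/2-1}e^{-|x|^2/(4s)}$ over $0<s\le|x|^2$ and integrating $\eta_{\alpha/2}$ against it yields only $O(|x|^{-n-2})$, which falls short of the required $O(|x|^{-n-2-\alpha})$ for \eqref{decay1}--\eqref{decay2}: the extra $|x|^{-\alpha}$ has to come from $\eta_{\alpha/2}$, not from the Gaussian. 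The paper sidesteps the issue entirely: the bound $\eta^{\alpha/2}(1,s)\le C_\alpha s^{-1-\alpha/2}$ cited from Bogdan--St\'os--Sztonyk is valid for \emph{all} $s>0$, so a single change of variables $u=|x|^2/(4s)$ gives
\[
|\partial_{x_i}\varphi(x)| \le \frac{C_{n,\alpha}}{|x|^{n+1+\alpha}}\int_0^\infty u^{(n+\alpha)/2}e^{-u}\,du
\]
with no splitting at all. You could repair your version by applying this global bound on both pieces (at which point the split is vacuous) or by splitting at a fixed constant rather than at $|x|^2$; as written, the far-field plan has an exponent gap of $|x|^{-\alpha}$.
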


\begin{proof}
Inverting the characteristic function of $X_1$ we see
\begin{equation}
\label{invchar}\varphi(x) = \int_{\mathbb{R}^n} e^{-i x \cdot \xi} e^{-|\xi|^\alpha}.
\end{equation}
From this we readily see that $\varphi\in C^\infty(\mathbb{R}^n)$, so in order to show (\ref{decay1}) it suffices to show that there exists a constant $C_{n,\alpha}$ so that 
\begin{equation}
\label{origin}|\partial_{x_i}\varphi(x)|\leq C_{n,\alpha} |x|
\end{equation}
 and 
\begin{equation}
\label{infinity}|\partial_{x_i}\varphi(x)| \leq \frac{C_{n,\alpha}}{|x|^{n+1+\alpha}}.
\end{equation}
Using the fact that 
\begin{equation*}
\int_{\mathbb{R}^n} \xi_i e^{-|\xi|^\alpha} d\xi =0,
\end{equation*}
we see that 
\begin{align*}
|\partial_{x_i}\varphi(x)| &= \left|\int_{\mathbb{R}^n} \xi_i e^{-ix\cdot \xi} e^{-|\xi|^\alpha} d\xi\right|\\
&=   \left|\int_{\mathbb{R}^n} \xi_i (e^{-ix\cdot \xi}-1) e^{-|\xi|^\alpha} d\xi\right|\\
&\leq  \int_{\mathbb{R}^n} |\xi| \:|e^{-ix\cdot \xi}-1| e^{-|\xi|^\alpha} d\xi\\
& \leq 2 \int_{\mathbb{R}^n} |\xi|^2 |x|e^{-|\xi|^\alpha} d\xi \leq C_{n,\alpha} |x|, \\
\end{align*} 
with the last inequality following because 
\begin{equation*}
|e^{ix\cdot \xi}-1| \leq |\cos(x\cdot\xi)-1| + |\sin(x\cdot\xi)| \leq 2 |x\cdot\xi|.
\end{equation*}
Therefore (\ref{origin}) holds. 

To show (\ref{infinity}), we express $X_t$ as a process subordinated to Brownian motion. A subordinator is an a.s. increasing one-dimensional L\'evy process. It is well known (see \cite{Blu} for details) that there exists a subordinator, $T_t$, such that
\begin{equation*}
X_t = B_{T_t},
\end{equation*}
where $B_t$ is a standard Brownian motion (run at twice the usual speed). By conditioning on $T_t$ we see that the density of $X_t$ is given by
\begin{equation*}
\psi_t(x) = \int_0^\infty \frac{1}{(4\pi s)^{n/2}}e^{-|x|^2/4s} \eta^{\alpha/2}(t,s)ds,
\end{equation*}
where $\eta^{\alpha/2}(t,\cdot)$ is the density of $T_t$. Since $\varphi=\psi_1$, we see that 
\begin{align*}
\partial_{x_i} \varphi(x) &= \int_0^\infty \frac{1}{(4\pi s)^{n/2}}\frac{x_i}{s}e^{-|x|^2/4s} \eta^{\alpha/2}\left(1,s\right)ds\\
&=  C_n \frac{x_i}{|x|^n}\int_0^\infty u^{\frac{n}{2}-1}e^{-u} \eta^{\alpha/2}\left(1,\frac{|x|^2}{4u}\right)du.
\end{align*}
It is known (see e.g.  \cite{Bog}) that there exists a constant $C_\alpha$, depending only on $\alpha$, such that 
\begin{equation}
\label{etabound}\eta^{\alpha/2}(t,s)\leq C_\alpha ts^{-1-\alpha/2}.
\end{equation}
 Therefore we have
\begin{equation*}
|\partial_{x_i}\varphi(x)| \leq \frac{C_\alpha}{|x|^{n+1+\alpha}}\int_0^\infty  u^{(n+\alpha)/2}e^{-u} du
\end{equation*}
so (\ref{infinity}) holds. Similar computations show
\begin{equation*}
 |\varphi(x)|\leq \frac{C_\alpha}{|x|^{n+\alpha}} \int_0^\infty u^{(n+\alpha-2)/2} e^{-u}du
\end{equation*}
and
\begin{equation*}
|\partial_{x_i}\partial_{x_j}\varphi(x)|\leq \frac{C_\alpha}{|x|^{n+\alpha+2}} \int_0^\infty u^{(n+\alpha+2)/2}(u+1) e^{-u}du.
\end{equation*}
Moreover, since $\varphi$ is smooth, it and all of its all of its partial derivatives are bounded near the origin. 
Therefore $\varphi$ satisfies (\ref{decay0}) and (\ref{decay2}). 
\end{proof}
We are now poised to prove the theorem.

 \noindent\textbf{Case 1.} Either $i$ or $j=n+1$:

The fact that $a^{(i,j)}(x,y)=a^{(i,j)}(y)$ depends only on $y$ allows us to use the semigroup property of $\psi_y$. Note that \begin{equation*}
\varphi_y * \varphi_y = \psi_{y^\alpha} * \psi_{y^\alpha} = \psi_{2y^\alpha}= \varphi_{2^{1/\alpha}y}.
\end{equation*}
Therefore, substituting $w=\bar{x}-\tilde{x}$ we see that
\begin{align*}
|K^{(i,j)}(x,\tilde{x})| &= \left|\int_0^\infty \int_{\mathbb{R}^n} 2ya^{(i,j)}(y)\partial_{x_i}\varphi_y(w)\partial_{x_j}\varphi_y(w-(x-\tilde{x}))dwdy\right|\\
&= \left|\int_0^\infty 2ya^{(i,j)}(y)\int_{\mathbb{R}^n} \partial_{x_i}\varphi_y(w)\partial_{x_j}\varphi_y(w-(x-\tilde{x}))dwdy\right|\\
&=\left|\int_0^\infty 2ya^{(i,j)}(y)\partial_{x_i}\partial_{x_j}\varphi_{2^{1/\alpha}y}(x-\tilde{x})dy\right|\\
&\leq \|a^{(i,j)}\|_\infty \int_0^\infty 2y |\partial_{x_i}\partial_{x_j}\varphi_{2^{1/\alpha}y}(x-\tilde{x})|dy.
\end{align*}
Likewise,
\begin{align*}
|\partial_{x_k}K^{(i,j)}(x,\tilde{x})| &= \left|\int_0^\infty \int_{\mathbb{R}^n} 2ya^{(i,j)}(y)\partial_{x_i}\varphi_y(w)\partial_{x_k}\partial_{x_j}\varphi_y(w-(x-\tilde{x}))dwdy\right|\\
&\leq \|a^{(i,j)}\|_\infty \int_0^\infty 2y |\partial_{x_i}\partial_{x_j}\partial_{x_k}\varphi_{2^{1/\alpha}y}(x-\tilde{x})|dy.
\end{align*}
Therefore, it suffices to show that there exists a constant $C_{n,\alpha}$ so that  $|K(x)|\leq C_{n,\alpha}\frac{1}{|x|^n}$ and $|K'(x)| \leq C_{n,\alpha}\frac{1}{|x|^{n+1}}$ for all $x\neq 0$, where
\begin{equation*}
K(x) =  \int_0^\infty 2y |\partial_{x_i}\partial_{x_j}\varphi_{2^{1/\alpha}y}(x)|dy
\end{equation*}
and
\begin{equation*}
K'(x) =  \int_0^\infty 2y |\partial_{x_i}\partial_{x_j}\partial_{x_k}\varphi_{2^{1/\alpha}y}(x)|dy.
\end{equation*}
$\varphi_y$ is homogeneous of order $-n$, so its $k-th$ order partial derivatives are homogeneous of order $-n-k$. Therefore, if we make the substitution $y=|x|t$ we have
\begin{align*}
K(x) &=  \int_0^\infty 2y |\partial_{x_i}\partial_{x_j}\varphi_{2^{1/\alpha}y}(x)|dy\\
&= \int_0^\infty 2|x|t|\partial_{x_i}\partial_{x_j}\varphi_{2^{1/\alpha}|x|t}(|x|x')||x|dt\\
&= \int_0^\infty 2|x|t\frac{1}{|x|^{n+2}}|\partial_{x_i}\partial_{x_j}\varphi_{2^{1/\alpha}t}(x')||x|dt\\
&= \frac{1}{|x|^{n}}\int_0^\infty 2t|\partial_{x_i}\partial_{x_j}\varphi_{2^{1/\alpha}t}(x')|dt
\end{align*}
where $x'=\frac{x}{|x|}$. Similarly,
\begin{equation*}
K'(x) = \frac{1}{|x|^{n+1}}\int_0^\infty 2t|\partial_{x_i}\partial_{x_j}\partial_{x_k}\varphi_{2^{1/\alpha}t}(x')|dt.
\end{equation*}
The lemma will be proved as soon as we bound the above integrals. We have assumed that either $i$ or $j=n+1,$ so  we need to bound the following four integrals for any $1\leq k,l \leq n$. 
\begin{align*}
&\int_0^\infty 2t|\partial_t\partial_t\varphi_{2^{1/\alpha}t}(x')|dt,
&\int_0^\infty 2t|\partial_t\partial_{x_k}\varphi_{2^{1/\alpha}t}(x')|dt,\\
&\int_0^\infty 2t|\partial_t\partial_{x_k}\partial_{x_l}\varphi_{2^{1/\alpha}t}(x')|dt,
&\int_0^\infty 2t|\partial_{t}\partial_{t}\partial_{x_k}\varphi_{2^{1/\alpha}t}(x')|dt.
\end{align*}
We will show how to bound the first integral. The other three may be bounded by the exact same method. Recalling that $\varphi_t(x)=\frac{1}{t^n}\varphi\left(\frac{x}{t}\right)$, we see that 
\begin{equation*}
\partial_t\partial_t\varphi_{2^{1/\alpha}t}(x) =  \frac{C^{(1)}_n}{t^{n+2}} \varphi\left(\frac{x}{t}\right) + \frac{C^{(2)}_n}{t^{n+3}}\sum_{i=1}^n x_i\partial_{x_i}\varphi\left(\frac{x}{t}\right) + \frac{C^{(3)}_n}{t^{n+4}}\sum_{i=1}^n\sum_{j=1}^n x_ix_j\partial_{x_i}\partial_{x_j}\varphi\left(\frac{x}{t}\right),
\end{equation*} 
where $C^{(1)}_n, C^{(2)}_n$ and $C^{(3)}_n$ are constants depending on $n.$

Therefore, it suffices to bound 
\begin{equation*}
\int_0^\infty \frac{t}{t^{n+a}}\partial^{\beta}\varphi\left(\frac{x'}{t}\right)dt
\end{equation*}
when $a=2,3,$ or $4$ and $\beta$ is a multi-index with $|\beta|=a-2$. By (\ref{decay0}), (\ref{decay1}), and (\ref{decay2}), we have 
\begin{equation*}
\partial^{\beta}\varphi(x) \leq \frac{C_{n,\alpha}}{(1+|x|^2)^{(n+\alpha + |\beta|)/2}},
\end{equation*}
which implies
\begin{equation*}
\partial^{\beta}\varphi\left(\frac{x}{t}\right) \leq \frac{C_{n,\alpha}t^{n+\alpha + |\beta|}}{(t^2+|x|^2)^{(n+\alpha + |\beta|)/2}}.
\end{equation*}
Therefore,
\begin{align*}
\int_0^\infty \frac{t}{t^{n+a}}\partial^{\beta}\varphi\left(\frac{x'}{t}\right)dt &\leq  \int_0^\infty\frac{t}{t^{n+a}} \frac{t^{n+\alpha + |\beta|}}{(t^2+1)^{(n+\alpha + |\beta|)/2}}dt\\
&= \int_0^\infty \frac{t^{\alpha-1}}{(1+t^2)^{(n+\alpha + |\beta|)/2}}dt < \infty.
\end{align*}

\noindent\textbf{Case 2.} $1\leq i,j \leq n$:

Since $a^{i,j}(x,y)$ depends on both $x$ and $y,$ we are unable to use the semi-group property of $\psi_y$. We are, however, still able to pull out $\|A\|$ and use homogeneity. This again allows us to bound our kernel by the product of  $\frac{1}{|x-\tilde{x}|^n}$ and an integral. As in case 1, we start out by substituting $w=x-\tilde{x}$ to see
\begin{align*}
 |K^{(i,j)}(x,\tilde{x})| &\leq \|A\|\int_0^\infty\int_{\mathbb{R}^n} 2y|\partial_{x_i}\varphi_y(w)||\partial_{x_j}\varphi_y(w-(\tilde{x}-x))|dwdy \text{ and}\\
|\partial_{x_k}K^{(i,j)}(x,\tilde{x})| &\leq \|A\|\int_0^\infty\int_{\mathbb{R}^n} 2y|\partial_{x_i}\varphi_y(w)||\partial_{x_k}\partial_{x_j}\varphi_y(w-(\tilde{x}-x))|dwdy.
\end{align*}
Therefore, we need to show that there exists a constant $C_{n,\alpha}$ so that $|K(x)| \leq C_{n,\alpha}\frac{1}{|x|^n}$ and $|K'(x)| \leq C_{n,\alpha} \frac{1}{|x|^{n+1}}$ for all $x\neq 0$ where now
\begin{equation*}
K(x) = \int_0^\infty\int_{\mathbb{R}^n} 2y|\partial_{x_i}\varphi_y(w)||\partial_{x_j}\varphi_y(w-x)|dwdy
\end{equation*}
and
\begin{equation*}
K'(x) = \int_0^\infty\int_{\mathbb{R}^n} 2y|\partial_{x_i}\varphi_y(w)||\partial_{x_k}\partial_{x_j}\varphi_y(w-x)|dwdy.
\end{equation*}
Using homogeneity and substituting $y=t|x|$ and $w = |x|z$ we see that 
\begin{align}
|K(x)| &=\int_0^\infty\int_{\mathbb{R}^n} 2y|\partial_{x_i}\varphi_y(w)||\partial_{x_j}\varphi_y(w-x)|dwdy \nonumber\\
 &=\int_0^\infty\int_{\mathbb{R}^n} 2t|x||\partial_{x_i}\varphi_{|x|t}(|x|z)||\partial_{x_j}\varphi_{|x|t}(|x|(z-x'))||x|^{n+1}dzdt \nonumber\\
\label{Int1}&= \frac{1}{|x|^n}  \int_0^\infty\int_{\mathbb{R}^n} 2t|\partial_{x_i}\varphi_{t}(z)||\partial_{x_j}\varphi_t(z-x')|dzdt.
\end{align}
Similarly, we have 
\begin{equation}
\label{Int2}|K'(x)| = \frac{1}{|x|^{n+1}}  \int_0^\infty\int_{\mathbb{R}^n} 2t|\partial_{x_i}\varphi_{t}(z)||\partial_{x_k}\partial_{x_j}\varphi_t(z-x')|dzdt.
\end{equation}
Therefore, to complete the proof, we need to show that the integrals in (\ref{Int1}) and (\ref{Int2}) are convergent. (Note that a simple rotation of coordinates shows they do not depend on $x'$.) We will show that the integral in (\ref{Int2}) converges. The proof that the integral in ($\ref{Int1}$) converges is similar.

 By (\ref{decay1}) and (\ref{decay2}) we know that there exists a constant $C_{n,\alpha}$ so
\begin{equation}
|\partial_{x_i} \varphi(x)| \leq \frac{C_{n,\alpha}|x|}{(1+|x|^2)^{(n+2+\beta)/2}}
\end{equation}
and 
\begin{equation}
|\partial_{x_i} \partial_{x_j}\varphi(x)| \leq \frac{C_{n,\alpha}}{(1+|x|^2)^{(n+2+\beta)/2}},
\end{equation}
where $\beta = \min\{\alpha,\frac{1}{2}\}$. (The fact that $\beta \leq \frac{1}{2}$ will be used to see that a certain integral is convergent.)
Therefore,
\begin{equation}
\label{firstpartial}|\partial_{x_i} \varphi_t(x)| \leq \frac{C_nt^\beta |x|}{(t^2+|x|^2)^{(n+2+\beta)/2}}
\end{equation}
and 
\begin{equation}
\label{secondpartial} |\partial_{x_i} \partial_{x_j}\varphi_t(x)| \leq \frac{C_nt^\beta}{(t^2+|x|^2)^{(n+2+\beta)/2}}.
\end{equation}
This allows us to  see that 
\begin{align*}
&\int_0^\infty\int_{\mathbb{R}^n} 2t|\partial_{x_j}\partial_{x_i}\varphi_{t}(z)||\partial_{x_j}\varphi_t(z-x')|dzdt \\
&\leq \int_{\mathbb{R}^n} \int_0^\infty t\frac{t^\beta}{(|z|^2+t^2)^{(n+2+\beta)/2}}\frac{|x'-z|t^\beta}{(|x'-z|^2+t^2)^{(n+2+\beta)/2}}  dt dz,
\end{align*}
so it suffices to show that $g_1(z)$ and $g_2(z)$ are integrable over $\mathbb{R}^n$ for
\begin{align*}
g_1(z) &= \int_0^1 t^{1+2\beta}\frac{1}{(|z|^2+t^2)^{(n+2+\beta)/2}}\frac{|x'-z|}{(|x'-z|^2+t^2)^{(n+2+\beta)/2}}  dt\quad\text{and}\\
 g_2(z) &=  \int_1^\infty t^{1+2\beta}\frac{1}{(|z|^2+t^2)^{(n+2+\beta)/2}}\frac{|x'-z|}{(|x'-z|^2+t^2)^{(n+2+\beta)/2}} dt.
\end{align*}
 
If $z_n$ is a sequence converging to $z$, then
\begin{align*}
\lim_{n\rightarrow \infty} g(z_n) &= \lim_{n\rightarrow \infty} \int_1^\infty t^{1+2\beta}\frac{1}{(|z_n|^2+t^2)^{(n+\beta+2)/2}}\frac{|x'-z_n|}{(|x'-z_n|^2+t^2)^{(n+\beta+2)/2}} dt\\
&=  \int_1^\infty t^{1+2\beta}\frac{1}{(|z|^2+t^2)^{(n+2+\beta)/2}}\frac{|x'-z|}{(|x'-z|^2+t^2)^{(n+2+\beta)/2}} dt=g_2(z),
\end{align*}
with the middle inequality justified by the dominating convergence theorem applied to $|x'-z|\frac{1}{t^{2n+3}}$. Thus $g_2(z)$ is continuous on $\mathbb{R}^n$.
Furthermore, for large $z$, substituting $t=|z|\tan(\theta)$ allows us to see that
\begin{align*}
g_2(z) &\leq C_{n,\beta}\int_1^\infty t^{1+2\beta} \frac{|z|}{(|z|^2+t^2)^{n+\beta+2}}dt\\
&\leq C_{n,\beta}\int_0^{\pi/2}\frac{|z|^{1+2\beta}\tan^{1+2\beta}(\theta)|z| |z|\sec^2(\theta)}{|z|^{2n+4+2\beta}\sec^{2n+4+2\beta}(\theta)} d\theta\\
&= C_{n,\beta}\frac{1}{|z|^{2n+1}} \int_0^{\pi/2} \sin^{1+2\beta}(\theta)\cos^{2n+1}(\theta)d\theta,
\end{align*}
so $g_2(z)$ is integrable.

Likewise, we can see that $g_1(z)$ is continuous on $\mathbb{R}^n \setminus \{0,x'\}$ using by applying the dominating convergence theorem with $t^{2\beta+1}|z|^{-n-\beta-2}|x'-z|^{-n-\beta-1}$, and for large $z$ we have
\begin{equation*}
g_1(z) \leq C_{n,\beta}\frac{1}{|z|^{2n+1}} \int_0^{\pi/2} \sin^{1+2\beta}(\theta)\cos^{2n+1}(\theta)d\theta.
\end{equation*}
Therefore, it remains to show that $g_1(z)$ is integrable near $0$ and $x'$. 

If $|z|<1/2$ and $0<t<1$, it is easy to see
\begin{equation*}
\frac{|x'-z|}{(|x'-z|^2+t^2)^{(n+2+\beta)/2}} \leq C_{n,\beta}, 
\end{equation*}
so again substituting $t=|z|\tan(\theta)$ we see
\begin{align*}
g_1(z) &\leq C_{n,\beta} \int_0^1 \frac{t^{1+2\beta}}{(|z|^2+t^2)^{(n+2+\beta)/2}}dt\\
&\leq C_{n,\beta} \frac{1}{|z|^{n-\beta}}\int_0^{\pi/2} \sin^{2\beta}(\theta)\cos^{n-\beta-1}(\theta)d\theta.
\end{align*}
Since $\beta \leq \frac{1}{2}$, the last integral is finite, so $g_1(z)$ is integrable near 0. A simple change of variables and a nearly identical computation shows that $g_1(z)$ is integrable near $x'$, so therefore $g_1(z)$ is integrable on all of $\mathbb{R}^n$ which completes the proof. 

\end{proof}
We end this section by remarking that if $i$ or $j=n+1$, then the integral in (\ref{Int2}) is divergent. This is why we need the assumption that $a^{i,j}(x,y)=a^{i,j}(y)$ in that case. 

\section{Closing Remarks}
Examining the proof of theorem \ref{stable}, we see that the only facts we used were the homogeneity of $\varphi_y(x)$, the fact that $\widehat{\varphi}$ is ``small enough'' to cause $T_A$ to be bounded on $L^2(\mathbb{R}^n)$, and the bounds (\ref{decay0}), (\ref{decay1}), and (\ref{decay2}). This immediately gives us the following corollary.

\begin{corollary} \label{cond} Let $\phi\in C^2(\mathbb{R}^n)$ satisfy (\ref{decay0}), (\ref{decay1}), and (\ref{decay2}), and for $y>0,$ let $\phi_y= \frac{1}{y^n}\phi\left(\frac{x}{y}\right).$ Assume that there exists a constant $C$ such that for all $\xi' \in S^{n-1}$
\begin{equation}
\label{fourierray} \int_0^\infty t\widehat{\phi}(t\xi')^2dt < C.
\end{equation}
Let $A(x,y)$ be as in theorem \ref{stable}.
 Consider the kernel 
\begin{equation*}
 K_A(x,\tilde{x}) = \int_0^\infty \int_{\mathbb{R}^n} 2yA(\bar{x},y)\nabla \phi_y(\bar{x}-\tilde{x})\nabla \phi_y(\bar{x}-x) d\bar{x}dy,
\end{equation*}
where $\nabla = (\partial_{x_1},\ldots,\partial_{x_n},\partial_y)$.
 Then the operator 
\begin{equation*}
T_{A}f(x) = \int_{\mathbb{R}^n} K(x,\tilde{x}) f(\tilde{x})d\tilde{x}\
\end{equation*}
 is a CZ operator.
\end{corollary}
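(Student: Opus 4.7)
My plan is to retrace the proof of Theorem \ref{stable} with $\varphi$ replaced by $\phi$ throughout, and verify that every step uses only the three abstract properties highlighted by the author just before the corollary: the homogeneity $\phi_y(x) = y^{-n}\phi(x/y)$, the pointwise decay bounds (\ref{decay0})--(\ref{decay2}), and the Fourier-ray integrability hypothesis (\ref{fourierray}). Since nothing else about the specific form $\widehat{\varphi}(\xi) = e^{-(2\pi|\xi|)^\alpha}$ enters in an essential way, the corollary should fall out after a careful bookkeeping exercise.

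For the analogue of Lemma \ref{LL2} ($L^2$-boundedness), I would repeat the Cauchy--Schwarz manipulation in the proof of Lemma \ref{LL2} verbatim, reducing matters to the Littlewood--Paley--type estimate
\begin{equation*}
\int_0^\infty y \int_{\mathbb{R}^n} |\nabla u_f(x,y)|^2\, dx\, dy \leq C \|f\|_2^2, \qquad u_f = \phi_y \ast f.
\end{equation*}
For the horizontal part, Plancherel and polar coordinates (substituting $t = y|\xi|$ in the inner $dy$ integral, with $\xi' = \xi/|\xi|$) convert this into $\int_{\mathbb{R}^n} |\widehat{f}(\xi)|^2 \int_0^\infty t|\widehat{\phi}(t\xi')|^2\, dt\, d\xi$, and the inner integral is controlled by hypothesis (\ref{fourierray}). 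The vertical part, involving $\partial_y \widehat{\phi_y}(\xi) = \xi\cdot \nabla \widehat{\phi}(y\xi)$, needs the analogous integrability of $\int_0^\infty t|\nabla \widehat{\phi}(t\xi')|^2\, dt$; this follows from (\ref{decay0})--(\ref{decay2}), since those bounds make $x_j\phi(x)$ and $\partial_i\partial_j\phi(x)$ integrable, so $\nabla\widehat{\phi}$ is bounded and decays like $|\xi|^{-2}$ uniformly in direction.

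For the kernel estimates, Case 2 of the proof of Theorem \ref{stable} ($1\leq i,j\leq n$) uses only the homogeneity of $\phi_y$ and the pointwise bounds (\ref{decay0})--(\ref{decay2}) packaged into (\ref{firstpartial})--(\ref{secondpartial}); the entire chain of scaling substitutions $y=|x-\tilde{x}|t,\, \bar{x}=|x-\tilde{x}|z$ and the $\beta = \min\{\alpha, 1/2\}$ trick for integrability near the origin transfer verbatim with $\varphi\mapsto\phi$. This gives both the size (\ref{size}) and smoothness (\ref{smoothness1}) bounds for $K_A^{(i,j)}$ when both indices are horizontal.

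The main obstacle is Case 1 ($i$ or $j = n+1$), since there the original proof of Theorem \ref{stable} exploited the stable-process semigroup identity $\varphi_y \ast \varphi_y = \varphi_{2^{1/\alpha}y}$, which is not available for a general $\phi$. My plan is to bypass the semigroup shortcut and handle Case 1 by the same scaling-and-decay method used for Case 2: starting from $\partial_y \phi_y(z) = -ny^{-n-1}\phi(z/y) - y^{-n-2}z\cdot\nabla\phi(z/y)$, derive from (\ref{decay0})--(\ref{decay1}) a pointwise estimate of the form $|\partial_y \phi_y(z)| \leq Cy^{\alpha-1}(y^2+|z|^2)^{-(n+\alpha)/2}$ (and similar bounds when an additional horizontal derivative is applied). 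Substituting this alongside (\ref{firstpartial})--(\ref{secondpartial}) into the kernel and repeating the homogeneity rescaling $y = |x-\tilde{x}|t,\, \bar{x}-\tilde{x}=|x-\tilde{x}|z$ again factors out $|x-\tilde{x}|^{-n}$ (or $|x-\tilde{x}|^{-n-1}$ for the derivative) times a dimensionless double integral. The hard part will be verifying that this double integral converges at $t=0$, $|z|=0$, and $z=x'$ simultaneously; I expect one must split into regions $|z|<1/2$ and $|z|>2$, mirror the $\beta$-trick from Case 2 to tame the singularity at the origin of $z$, and use the $y^{\alpha-1}$ factor from $\partial_y\phi_y$ combined with Fubini to dispatch the $t\to 0$ end. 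Once that convergence is established, the size and gradient bounds on $K_A^{(i,j)}$ follow, completing the verification that $T_A$ is a CZ operator.
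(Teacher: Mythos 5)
Your handling of the $L^2$ estimate and of Case~2 is exactly right and transfers directly. The gap is in your plan for Case~1. You propose to discard the semigroup identity and instead run the two-variable scaling-and-decay argument of Case~2 on the kernel with the $\bar{x}$-integral left in place, hoping that the factor $y^{\alpha-1}$ from $\partial_y\phi_y$ will rescue convergence. But the remark immediately following the proof of Theorem~\ref{stable} says that this fails: ``if $i$ or $j = n+1$, then the integral in (\ref{Int2}) is divergent.'' The mechanism is that, unlike the horizontal derivatives, $\partial_y\phi_y$ does \emph{not} vanish at the spatial origin, so its pointwise bound has no $|z|$ factor in the numerator; near the second singular point $z = x'$ of the rescaled double integral one is then left with something of order $\int_0 \rho^{\alpha-2}\,d\rho$, which diverges whenever $\alpha \le 1$. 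The $y^{\alpha-1}$ factor is the cost, not the cure, of taking a vertical derivative, and your ``hard part'' cannot in fact be carried out.

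What makes Case~1 work for a general $\phi$ is that integrating out $\bar{x}$ collapses to a convolution with no semigroup structure required:
\begin{equation*}
\int_{\mathbb{R}^n}\partial_i\phi_y(w)\,\partial_j\phi_y\bigl(w-(x-\tilde{x})\bigr)\,dw = \pm\,\partial_i\partial_j(\phi_y*\phi_y)(x-\tilde{x}),
\end{equation*}
and $\phi_y*\phi_y = \Phi_y$ with $\Phi=\phi*\phi$, which is again dilated homogeneously of degree $-n$. The one-variable scaling argument the paper then runs uses only the homogeneity of $\Phi_y$ together with decay bounds of the form (\ref{decay0})--(\ref{decay2}) on $\Phi$, and a short convolution estimate shows that $\phi*\phi$ inherits such decay from $\phi$. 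This is the route that the paper's ``immediately'' implicitly invokes; the semigroup identity was used in the theorem only to recognize $\phi*\phi$ as another dilate of $\phi$, which is cosmetic. (One caveat shared by your reading and the paper's: the gradient bound (\ref{smoothness1}) on $K_A^{(i,j)}$ in Case~1 involves a third-order derivative of the collapsed kernel, so a complete verification needs decay one order beyond (\ref{decay2}), which is available via subordination for $\varphi$ but is an implicit extra hypothesis for a general $\phi\in C^2$.)
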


The key to proving lemma (\ref{Lbounds}) was the fact that we could write the $\alpha$-stable process as $B_{T_t}$ where $T_t$ is the $\alpha/2$ stable subordinator and $B_t$ is an independent Brownian motion (run at twice the usual speed). This motivates the following question. Let $T_t$ be a subordinator, let $B_t$ be an independent Brownian motion, and let $X_t=B_{T_t}$. Under what conditions on $T_t$ does the density of $X_1$ satisfy the conditions of corollary \ref{cond}?

If $X_t=B_{T_t}$ is any such process, called subordinate Brownian motion in the literature, it is well known (see for example \cite{Kim}) that there exists a function $\Phi:[0,\infty)\rightarrow [0,\infty)$, called the Laplace exponent of $T_t$, such that
\begin{equation}
\label{Laplace}e^{-\lambda T_t} = e^{-t\Phi(\lambda)}
\end{equation}
and that the L\'evy symbol of $X_t$ is given by
\begin{equation*}
\rho(\xi) = -\Phi\left(|\xi|^2\right).
\end{equation*}
Inspecting the proofs of lemma \ref{LL2} and lemma \ref{Lbounds}, we see that in order for the density of $X_1$ to satisfy the conditions of corollary \ref{cond}, it suffices to have a bound similar to (\ref{etabound}) on the density of $T_1$, and for $\Phi(\lambda)$ to increase fast enough as $\lambda \rightarrow \infty$ for the integrals in the proofs to converge. We summarize this in the following corollary.

\begin{corollary} \label{subcor} Let $X_t=B_{T_t}$ where $T_t$ is a subordinator and $B_t$ is an independent Brownian motion run at twice the usual speed.  Let $\phi$ denote the density of $X_1$, and for $y>0$, let $\phi_y(x)=\frac{1}{y^n}\phi\left(\frac{x}{y}\right)$. Let $\Phi$ be the Laplace exponent of $T_t$ and assume that there exists some $\delta >0$ so that
\begin{equation}
\Phi(\lambda) \geq O(\lambda^\delta),\,\,\, \: \text{as} \: \lambda \rightarrow \infty.
\end{equation}
 Further assume that there exist a constants $C$ and  $\gamma>0$ such that the density of $T_1$, $\eta(1,\cdot)$, satisfies
\begin{equation}
\eta(1,s) \leq C s^{-1-\gamma/2}
\end{equation}
for all $s>0$. Let $A(x,y)$ be as in theorem \ref{stable} and consider the kernel 
\begin{equation*}
 K_A(x,\tilde{x}) = \int_0^\infty \int_{\mathbb{R}^n} 2yA(\bar{x},y)\nabla \phi_y(\bar{x}-\tilde{x})\nabla \phi_y(\bar{x}-x) d\bar{x}dy,
\end{equation*}
where $\nabla = (\partial_{x_1},\ldots,\partial_{x_n},\partial_y)$.
 Then the operator 
\begin{equation*}
T_{A}f(x) = \int_{\mathbb{R}^n} K(x,\tilde{x}) f(\tilde{x})d\tilde{x}\
\end{equation*}
 is a CZ operator.
\end{corollary}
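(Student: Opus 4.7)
The plan is to verify the three hypotheses of Corollary \ref{cond} for $\phi$, with the decay parameter $\alpha$ appearing in (\ref{decay0})--(\ref{decay2}) replaced throughout by $\gamma$, and then invoke that corollary directly. To get the pointwise bounds on $\phi$ and its first and second derivatives, I would recycle the proof of Lemma \ref{Lbounds} almost verbatim, substituting the assumed subordinator density bound $\eta(1,s)\leq Cs^{-1-\gamma/2}$ in place of (\ref{etabound}). Conditioning on $T_1$,
\[
\phi(x)=\int_0^\infty \frac{1}{(4\pi s)^{n/2}}e^{-|x|^2/4s}\,\eta(1,s)\,ds,
\]
and after differentiating under the integral and substituting $u=|x|^2/(4s)$, I expect to obtain, for every multi-index $\beta$ with $|\beta|\leq 2$, a bound $|\partial^\beta\phi(x)|\leq C_{n,\gamma}|x|^{-n-\gamma-|\beta|}$ valid for $x\neq 0$, exactly as in the $\alpha$-stable case.

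Next I would argue that $\phi$ is smooth with bounded derivatives near the origin. Since $X_t=B_{T_t}$ with $B_t$ an independent Brownian motion run at twice the usual speed, the characteristic function of $X_1$ is $e^{-\Phi(|\xi|^2)}$ by (\ref{Laplace}), so $\widehat\phi(\xi)=e^{-\Phi((2\pi|\xi|)^2)}$. The growth assumption $\Phi(\lambda)\geq c\lambda^\delta$ for large $\lambda$ forces $\widehat\phi$ to decay faster than any polynomial, hence $\phi\in C^\infty(\mathbb{R}^n)$ with uniformly bounded derivatives. Combining this with the large-$|x|$ decay from the previous step yields the bounds (\ref{decay0})--(\ref{decay2}) globally, with $\gamma$ in place of $\alpha$.

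For the Fourier-ray integrability (\ref{fourierray}), rotational invariance of $\widehat\phi$ eliminates any dependence on $\xi'\in S^{n-1}$ and reduces the condition to
\[
\int_0^\infty t\,e^{-2\Phi((2\pi t)^2)}\,dt<\infty.
\]
Near $t=0$, $\Phi\geq 0$ makes the integrand dominated by $t$, which is locally integrable. For large $t$, the lower bound $\Phi((2\pi t)^2)\geq c(2\pi t)^{2\delta}$ renders the integrand exponentially small, so the integral converges. All three hypotheses of Corollary \ref{cond} are then in force, and that corollary delivers the conclusion that $T_A$ is a Calder\'on--Zygmund operator.

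The main obstacle here, if any, is bookkeeping rather than any genuinely new idea: one has to make sure that $\gamma$ plays precisely the role in the pointwise decay bounds that $\alpha$ played in Lemma \ref{Lbounds}, and that $\delta$ plays precisely the role that $\alpha$ played in the $L^2$-boundedness computation of Lemma \ref{LL2}, where the convergence of $\int_0^\infty t\,e^{-2(2\pi t)^\alpha}\,dt$ was the key point. The two parameters $\gamma$ and $\delta$ are decoupled in general (one controls small-scale behavior of the subordinator density, the other controls large-frequency behavior of the Laplace exponent), which is why both appear in the hypotheses; verifying that both feed correctly into the respective steps is the only place where care is needed.
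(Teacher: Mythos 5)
Your proposal is correct and takes the same route as the paper, which itself treats Corollary \ref{subcor} as an immediate consequence of Corollary \ref{cond} obtained by ``inspecting the proofs of Lemma \ref{LL2} and Lemma \ref{Lbounds}'': the assumed bound on $\eta(1,\cdot)$ substitutes for (\ref{etabound}) to give the pointwise decay estimates (with $\gamma$ playing the role of $\alpha$), while $\widehat\phi(\xi)=e^{-\Phi((2\pi|\xi|)^2)}$ together with $\Phi(\lambda)\gtrsim\lambda^\delta$ gives both smoothness of $\phi$ near the origin and the convergence in (\ref{fourierray}). Your closing observation that $\gamma$ governs the real-space decay estimates and $\delta$ the Fourier-side convergence is precisely the decoupling the paper's hypotheses are designed around.
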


An interesting example of subordinate Brownian motion is provided by the so called relativistic $\alpha$-stable processes. The relativistic $\alpha$-stable process with mass $m\geq 0$ is the L\'evy process, $X^m_t$, with L\'evy symbol 
\begin{equation}
 \label{relchar}\rho(\xi) = -\left((|\xi|^2+m^{2/\alpha})^{\alpha/2}-m\right).
\end{equation}
These processes have been widely studied in recent years because of their importance in relativistic quantum mechanics. (See \cite{Chen} and the references provided there for more information.) In \cite{Ryznar} it is shown that $T_t$, the subordinator for $X^m_t,$ has density
\begin{equation}
\label{relsub} \eta^{m,\alpha/2}(t,s) = e^{mt}e^{-m^{2/\alpha}s}\eta^{\alpha/2}(t,s),
\end{equation}
and Laplace exponent 
\begin{equation}
\label{rellap} \Phi(\lambda) = (\lambda+m^{2/\alpha})^{\alpha/2}-m.
\end{equation}
Therefore, we readily see that the conditions of corollary \ref{subcor} are satisfied.

The motivation of this paper was to answer questions left open in \cite{BanMen} and \cite{BanWan}. Are the operators considered in those papers weak-type $(1,1)$ in addition to being strong-type $(p,p)$ for $1<p<\infty$? Proving that these operators are CZ shows that the answer to this question is, in fact, yes. However, CZ operators are also known to satisfy a number   of other desirable properties. For example, they boundedly map the Hardy space $H^1(\mathbb{R}^n)$ to $L^1(\mathbb{R}^n)$ and the space of functions with bounded mean oscillation (BMO) to $L^\infty(\mathbb{R}^n)$. More precisely, if $T$ is any CZ operator, then there exist universal constants $C_n$ and $C'_n$, which depend only on $n,$ so that 
\begin{equation*}
\|T\|_{H^1\rightarrow L^1} \leq C_n (\kappa + \|T\|_{L^2\rightarrow L^2})
\end{equation*}
and
\begin{equation*}
\|T\|_{BMO \rightarrow L^\infty} \leq C'_n (\kappa + \|T\|_{L^2\rightarrow L^2})
\end{equation*}
where $\kappa$ is as in (\ref{size}), (\ref{smoothness1}) and (\ref{smoothness2}). For details on this topic, see \cite[ch. 8]{Gra}.

Another interesting  property of CZ operators is that they are bounded on certain weighted $L^p$ spaces. A weight is a function $w\in L^1_{\text{loc}}(\mathbb{R}^n)$ which is positive almost everywhere. The associated space $L^p(w)$, $1\leq p <\infty$, is the collection of functions $f$ on $\mathbb{R}^n$ such that 
\begin{equation*} 
\|f\|^p_{L^p(w)} = \int_{\mathbb{R}^m} |f(x)|^p w(x) dx < \infty.
\end{equation*}
The Muckenhoupt characteristic of $w$ is defined as 
\begin{equation}
\|w\|_{A_p} = \sup_Q \frac{1}{|Q|} \int_Q w dx \cdot \left(\frac{1}{|Q|}\int_Q w^{-1/(p-1)} dx \right)^{p-1},
\end{equation}
with the supremum taken over all cubes, $Q$.
Note that when $p=2$ this becomes
\begin{equation*}
\|w\|_{A_2} = \sup_Q \frac{1}{|Q|} \int_Q w dx \cdot \frac{1}{|Q|} \int_Q w^{-1} dx.
\end{equation*}
$w$ is said to be an $A_p$ weight if $\|w\|_{A_p}$ is finite. In this case, it is well known (see for example \cite[ch. 9]{Gra}) that if $T$ is a CZ operator, then there exists a constant $C_{n,p,T,w}$, depending on the $n$, $p$, $T$, and $w$, such that 
\begin{equation*}
\|Tf\|_{L^p(w)} \leq C_{n,p,T,w} \|f\|_{L^p(w)},
\end{equation*}
for all $f\in L^p(w)$ when $1<p<\infty$. (A corresponding weak-type result holds when $p=1$.)

Recently, in \cite{Hyt}, Hyt\"onen proved the so called ``$A_2$ conjecture,'' that $C_{n,2,T,w}$ depends linearly on $\|w\|_2$, i.e., there exists a constant $C_{n,2,T}$ such that 
\begin{equation*}
\|Tf\|_{L^2(w)} \leq C_{n,2,T} \|w\|_{A_2} \|f\|_{L^2(w)},
\end{equation*} 
for all $f\in L^2(w)$. Combining this with a result of Dragi\v{c}evi\'c, Grafakos, Pereyra, and Petermichl \cite{Dra} shows
that 
\begin{equation*}
\|Tf\|_{L^p(w)} \leq C_{n,p,T} \|w\|_{A_p}^{\max\{1,\frac{1}{p-1}\}}\|f\|_{L^p(w)}
\end{equation*}
for all $f\in L^p(w).$
For more information weighted $L^p$ spaces and the $A_2$ conjecture, see \cite[ch. 9]{Gra} and \cite{Hyt}. 

The operators considered in \cite{BanMen} are generalized in \cite{AppBan} and \cite{BanBog} by taking the projections of martingales transforms involving more general L\'evy processes in place of Brownian motion. These more general operators are shown in these papers to obey the same $``p^*-1"$ strong-type bound for $1<p<\infty$ as the operators from \cite{BanMen}. In the current paper we have shown that the operators considered in \cite{BanMen} are CZ operators, and therefore are also weak-type (1,1).  It would be interesting to know if the same is true of the operators studied in  \cite{AppBan} and \cite{BanBog}.
\bigskip

\noindent\textbf{Acknowledgments.} I would like to thank my advisor, Rodrigo Ba\~nuelos, for his invaluable help and encouragement while writing this paper.

\end{document}